 \let\mathscr\relax
\renewcommand{\d}{\text{d}} 
\newcommand{\Rd}{\mathbb{R}^d}
\newcommand{\E}{\mathbb{E}} 
\renewcommand{\P}{\mathbb{P}} 
\newcommand{\F}{\mathscr{F}} 
\newcommand{\q}[1]{\langle #1 \rangle_t} 
\newtheorem{theorem}{Theorem}[section]
\newtheorem{lemma}[theorem]{Lemma}
\newtheorem{proposition}[theorem]{Proposition}
\newtheorem{conjecture}[theorem]{Conjecture}
\newtheorem{corollary}[theorem]{Corollary}
\theoremstyle{definition}
\newtheorem{definition}[theorem]{Definition}
\theoremstyle{remark}
\newtheorem{remark}[theorem]{Remark}
\title{Tail Asymptotics of the Signature of various stochastic processes and its connection to the Quadratic Variation}
\author{Martin Albert Gbúr\footnote{University of Warwick, UK. This research has been funded by the Undergraduate Research Support Scheme (URSS). The author is very grateful to Horatio Boedihardjo for an interesting project, frequent meetings and enlightening discussions. The author is also grateful to Tessy Papavasiliou for introducing him to RPT and useful research ideas.}}
\date{Project supervisors: Horatio Boedihardjo and Anastasia Papavasiliou}
\begin{document}

\maketitle

\begin{abstract}
The signature of a path is a sequence, whose $n$-th term contains $n$-th order iterated integrals of the path. These iterated integrals of sample paths of stochastic processes arise naturally when studying solutions of differential equation driven by those processes. This paper extends the work of Boedihardjo and Geng \cite{Boedihardjo:2019td} who found a connection between the signature of $d$-dimensional Brownian motion and the time elapsed, which equals to the quadratic variation of Brownian motion. We prove this connection for a more general class of processes, namely for  Itô signature of semimartingales. We also establish a connection between the fWIS Signature of fractional Brownian motion and its Hurst parameter. We propose a conjecture for extension to multidimensional space.
\end{abstract}

\section{Introduction}\label{section-introduction}

\subsection{Theoretical background}\label{subsection-background}

This is a very brief introduction to the Rough path theory introduced in \cite{Lyons:1998td}, roughly following Chapter 3 of \cite{Lyons:2007td}. Let $V$ be a real Banach space. Define the space of formal sequences of tensors by $T((V)) = \prod_{k=0}^{\infty} V^{\otimes k}$ and the truncated tensor algebra of order $N$ by $T^N(V) = \bigoplus_{k=0}^N V^{\otimes k}$. 

For a path $X:[0,T] \rightarrow V$ of finite $p$-variation for some $p<2$ we define the signature of $X|_{[s,t]}$ as 
\begin{align*}
  \mathbb{X}_{s,t} &= (1, \mathbb{X}_{s,t}^1, \mathbb{X}_{s,t}^2, \dots) \in T((V)) \hspace{1cm} \text{where} \\
  \mathbb{X}_{s,t}^n &= \int_s^t \int_s^{u_n} \dots \int_s^{u_2} \d X_{u_1} \otimes \dots \otimes \d X_{u_{n-1}} \otimes \d X_{u_n}  
\end{align*}
 and the integrals are defined in Young sense. In the case of a $d$-dimensional Brownian motion ($V = \Rd$ with basis $e_1, \dots, e_d$), we can define the signature similarly, by letting 
\begin{align*}
    \mathbb{B}_{s,t}^n :&= \sum_{1 \leq i_1, \dots, i_n \leq d} \mathbb{B}_{s,t}^{n, i_1, \dots, i_n}  e_{i_1} \otimes \dots \otimes e_{i_n} \in (\Rd)^{\otimes n} \hspace{1cm} \text{where} \\
    \mathbb{B}_{s,t}^{n, i_1, \dots, i_n} :&= \int_s^t \int_s^{u_n} \dots \int_s^{u_2} \d X_{u_1}^{i_1} \dots \d X_{u_{n-1}}^{i_{n-1}} \d X_{u_n}^{i_n} 
\end{align*}
However, as we're no longer in the finite $p$-variation setting, there are multiple ways of defining the integral (e.g. Itô, Stratonovich) resulting in different objects. 

Let $\Delta_T = \{(s,t) \in [0,T]^2: 0\leq s \leq t \leq T\}$. If we truncate the signature after its $n$-th term, we can view $\mathbb{X}$ as a map from $\Delta_T$ to $T^N(V)$, where (1) $\mathbb{X}^0 = 1$ and Chen's theorem asserts that (2) $\mathbb{X}_{s,u} \otimes \mathbb{X}_{u,t} = \mathbb{X}_{s,t}$ for any $s\leq u \leq t \in [0,T]$. The Itô and Stratonovich signatures of Brownian motion also share these two properties, and we call all such mappings \textit{Multiplicative functionals}. We can also extend the notion of $p$-variation to these functionals by defining the metric  
\begin{align*}
    d_p(\mathbb{X}, \mathbb{Y}) = \max_{1\leq i \leq n} \sup_{0=t_0 < t_1 < \dots < t_k = T} \left( \sum_{0\leq j \leq k-1} \| \mathbb{X}_{t_j,t_{j+1}}^i - \mathbb{X}_{t_j,t_{j+1}}^i \|_{V^{\otimes i}}^{p/i} \right)^{i/p}
\end{align*}
where $n$ is the integer part of $p$ and the supremum is taken over all partitions of $[0,T]$. We say that $\mathbb{X}$ has finite $p$-variation if $d_p(\mathbb{X},1) < \infty$. This leads us to the definition of a $p$-\textit{rough path} as a degree $\lfloor p \rfloor$ multiplicative functional with finite $p$-variation. Both Itô and Stratonovich signatures are $p$-rough paths for any $p\in(2,3)$.

There is a famous theorem by T. Lyons \cite{Lyons:1998td}, which states that any $p$-rough path can be uniquely extended into a multiplicative functional on $T((V))$, i.e. the signature of a finite $p$-variation path for $p\in(1,2)$ is the only multiplicative functional with the desirable property that $\mathbb{X}^1_{s,t} = X_t - X_s$. For Brownian motion, the choice is then determined by the second term. 

In statistical applications, we might want to infer properties of a process driven by a rough input, from its observations in time (see e.g. \cite{Fermanian:2023td} for a concise overview). Since the signature arises naturally in describing solutions to path-driven differential equations as Taylor expansions, it should encode a lot of the properties we're interested in. We would therefore like to lift the driving process into a suitable rough path - such that if we embed our observations into a path (continuous, of bounded variation) in some appropriate way and compute its signature, this will provide us with a good approximation of the rough path. 

This motivates a class of rough paths called $p$-\textit{geometric rough paths} defined as the closure of the space of signatures of bounded variation paths under the $p$-variation metric. In the case of Brownian motion, the Itô signature isn't a $p$-geometric rough path, but the Stratonovich signature is. Moreover, it equals the limit of signature of the dyadic piecewise linear interpolations to the Brownian sample path, and therefore we call it the cannonical lift. 

\subsection{Previous results}\label{subsection-previous}

Since the signature encodes properties of a path, the natural question is, how much information can we decode from the signature and if any properties get lost in the transformation. For bounded variation paths, it was shown that the signature determines a path up to tree-like equivalences \cite{Hambly:2010td}, and a similar result was proved for Brownian sample paths as well \cite{Qian:2011td}. It is therefore natural to ask, whether we can recover the quadratic variation of a sample path of a stochastic process from its signature. The following estimate on the $p$-variation of a $p$-rough path $\mathbb{X}$ is known:
\begin{align}\label{eq:1}
    \|\mathbb{X}^n\|_{\text{proj}} \leq \frac{C(\mathbb{X})^{n/p}}{(n/p)!},
\end{align}
where $C(\mathbb{X})$ is a constant depending on the $p$-variation of $\mathbb{X}$.

Let $V = \Rd$ and assume that $(B_t)_{t\geq 0} = (B_t^1, \dots, B_t^d)_{t\geq 0}$ is a $d$-dimensional Brownian motion with its cannonical lift into a $p$-geometric rough path $\mathbb{B}$ for $2<p<3$. In view of (\ref{eq:1}) it is natural to study the quantity: 
\begin{align}\label{eq:2}
    L^{\mathbb{B}}_{s,t} = \limsup_n \left\| \left(\frac{n}{2}\right)! \mathbb{B}_{s,t}^n \right\|_n^{\frac{2}{n}}
\end{align}
where $(\| \cdot \|_n)_n$ is an \textit{admissible} sequence of norms: 
\begin{itemize}
    \item $\|\cdot \|_n$ is a norm on $(\Rd)^{\otimes n}$
    \item for any $a\in (\Rd)^{\otimes n}$ and $b \in (\Rd)^{\otimes m}$ we have $\| a\otimes b \|_{m+n} \leq \| a \|_{n} \|  b \|_{m}$
    \item for any permutation $\sigma$ on $\{1, \dots n\}$, let $P_{\sigma}$ be a linear operator which sends $v_1 \otimes \dots \otimes v_n$ into $v_{\sigma (1)} \otimes \dots \otimes v_{\sigma (n)}$ and then we have for any $a\in (\Rd)^{\otimes n}$ that $\| P_{\sigma} a \|_{n} = \| a \|_{n}$
\end{itemize}
One example of such sequence of norms is the \textit{projective norm}:
\begin{align*}
    \| a \|_{n, \text{proj}} := \inf \left\{ \sum_l |v_1^l|_{\Rd} \dots |v_n^l|_{\Rd} \text{ when } a = \sum_l v_1^l \otimes \dots \otimes v_n^l \right\}
\end{align*}
which depends on the choice of norm on $\Rd$. In the case of $l^1$ we would get 
\begin{align*}
    \|\mathbb{B}_{s,t}^n\|_{n,l^1} = \sum_{i_1, \dots, i_n = 1}^d |\mathbb{B}_{s,t}^{n, i_1, \dots, i_n}|,
\end{align*}
which coincides with the $l^1$ norm on $(\Rd)^{\otimes n}$. Another admissible norm is $l^2$, under which 
\begin{align*}
    \|\mathbb{B}_{s,t}^n\|_{n,l^2} = \left( \sum_{i_1, \dots, i_n = 1}^d |\mathbb{B}_{s,t}^{n, i_1, \dots, i_n}|^2 \right)^{1/2}
\end{align*}
and then $(\Rd)^{\otimes n}$ is a Hilbert space.

In \cite{Boedihardjo:2019td} it was proved that with the set up as above, we have: 

\begin{theorem}
    Let $(B_t)_{t\geq 0} = (B_t^1, \dots, B_t^d)_{t\geq 0}$ be a $d$-dimensional Brownian motion, $(\| \cdot \|_n)_n$ an \textit{admissible} sequence of norms and $L^{\mathbb{B}}_{s,t}$ defined as in (\ref{eq:2}). If each element of the canonical basis $\{ e_1, \dots, e_d \}$ of $\mathbb{R}^d$ has norm one with respect to $\| \cdot \|$, then there exists a deterministic constant $\kappa_d$ depending on dimension $d$ and the choice of norms, such that almost surely: 
    \begin{align*}
        L^{\mathbb{B}}_{s,t} = \kappa_d (t-s) \hspace{1cm} \forall s<t
    \end{align*}
    Moreover, we have the upper estimate $\kappa_d \leq d^2$, and under the $l^p$ norm on $\Rd$ ($p\in [1,2]$) and the associated \textit{projective} tensor norms $(\| \cdot \|_n)_n$ we have the lower estimate $\kappa_d \geq (d-1)/2$
\end{theorem}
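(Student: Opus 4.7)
The strategy is to reduce the statement to an almost-sure constancy result for $L^{\mathbb{B}}_{0,1}$ via Brownian scaling, establish determinism through a $0$-$1$ law, and then derive matching upper and lower bounds on the resulting constant.

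For any $\lambda > 0$ the rescaled path $\tilde B_u := \lambda^{-1/2} B_{\lambda u}$ is again a $d$-dimensional Brownian motion with $\tilde{\mathbb{B}}^n_{0,1}(\omega) = \lambda^{-n/2} \mathbb{B}^n_{0,\lambda}(\omega)$ pathwise, so $L^{\tilde{\mathbb{B}}}_{0,1}(\omega) = \lambda^{-1} L^{\mathbb{B}}_{0,\lambda}(\omega)$. Combined with stationarity of Brownian increments, this gives $L^{\mathbb{B}}_{s,t} \stackrel{d}{=} (t-s) L^{\mathbb{B}}_{0,1}$. Once the constancy $L^{\mathbb{B}}_{0,1} = \kappa_d$ a.s.\ is established, the distributional identity forces $L^{\mathbb{B}}_{s,t} = \kappa_d(t-s)$ a.s.\ for every fixed pair, and hence simultaneously on a countable dense set of rationals. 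The extension to all $s < t$ simultaneously uses Chen's identity and the factorial decay \eqref{eq:1} to show that $(s,t) \mapsto L^{\mathbb{B}}_{s,t}$ is continuous a.s.

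The main obstacle is the determinism of $L^{\mathbb{B}}_{0,1}$. The plan is to show it is measurable with respect to the germ $\sigma$-algebra $\bigcap_{\epsilon>0} \sigma(B_u - B_\epsilon : u \in [\epsilon, 1])$ and invoke Blumenthal's $0$-$1$ law. For any fixed $\epsilon > 0$, Chen's relation gives $\mathbb{B}^n_{0,1} = \sum_{k=0}^n \mathbb{B}^k_{0,\epsilon} \otimes \mathbb{B}^{n-k}_{\epsilon,1}$. Sub-multiplicativity together with \eqref{eq:1} applied to $\|\mathbb{B}^k_{0,\epsilon}\|_k$ should let me show that the initial segment contributes only a lower-order term to $((n/2)!\|\mathbb{B}^n_{0,1}\|_n)^{2/n}$ as $n \to \infty$, so that $L^{\mathbb{B}}_{0,1} = L^{\mathbb{B}}_{\epsilon,1}$ a.s.\ for every $\epsilon$. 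Delicately controlling the convolution cross-terms near $k \approx n/2$ -- so that neither factor individually saturates the $\limsup$ -- is where I expect most of the technical work.

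For the upper bound, admissibility combined with $\|e_i\|=1$ forces $\|\cdot\|_n \leq \|\cdot\|_{\ell^1}$ on $(\Rd)^{\otimes n}$ after expansion in the canonical basis (triangle inequality plus sub-multiplicativity on tensors of basis elements). Using iterated It\^o isometry and Burkholder--Davis--Gundy, supplemented by a Stratonovich-to-It\^o shuffle correction, I would establish $\mathbb{E}|\mathbb{B}^{n,i_1,\dots,i_n}_{0,1}|^{2m} \leq C_m^n/(n!)^m$; summing over $d^n$ multi-indices and applying Markov with Borel--Cantelli yields $\|\mathbb{B}^n_{0,1}\|_n \leq C d^n/\sqrt{n!}$ a.s.\ eventually, which by Stirling translates into $L^{\mathbb{B}}_{0,1} \leq d^2$.

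For the lower bound under projective $\ell^p$ tensor norms, I would use duality: for any unit-norm functional $\phi$ on $(\Rd)^{\otimes n}$, $|\phi(\mathbb{B}^n_{0,1})| \leq \|\mathbb{B}^n_{0,1}\|_{n,\text{proj}}$. Choosing $\phi$ to extract iterated Lévy-area-type coordinates indexed by $(1,j,1,j,\dots)$ for each $j=2,\dots,d$ reduces the estimate to computing moments of iterated integrals in a $2$-dimensional Brownian sub-motion, whose growth is of order $c^{n/2}/(n/2)!$. Aggregating the $d-1$ independent such contributions produces an effective $c = (d-1)/2$, and a Paley--Zygmund second-moment argument upgrades the resulting moment lower bound to the a.s.\ estimate $\kappa_d \geq (d-1)/2$. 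Matching the exact constants in the lower bound is the second delicate step, as one must verify that no cancellation occurs when aggregating across the $d-1$ coordinate pairs.
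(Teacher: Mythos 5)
The determinism step is where your proposal breaks down. You want to show that the initial segment contributes only a lower-order term, so that $L^{\mathbb{B}}_{0,1} = L^{\mathbb{B}}_{\epsilon,1}$ a.s., and then invoke a Blumenthal-type $0$-$1$ law. That identity is false, and in fact contradicts the very theorem you are proving: since $L^{\mathbb{B}}_{s,t} = \kappa_d(t-s)$ with $\kappa_d \geq (d-1)/2 > 0$ for $d \geq 2$, one has $L^{\mathbb{B}}_{0,1} = \kappa_d$ but $L^{\mathbb{B}}_{\epsilon,1} = \kappa_d(1-\epsilon)$. The normalisation $((n/2)!\,\|\cdot\|_n)^{2/n}$ makes Chen's convolution $\mathbb{B}^n_{0,1} = \sum_k \mathbb{B}^k_{0,\epsilon} \otimes \mathbb{B}^{n-k}_{\epsilon,1}$ behave \emph{additively} in the interval lengths (the dominant terms sit at $k \approx \epsilon n$ and yield $L_{0,1} \approx L_{0,\epsilon} + L_{\epsilon,1}$ at the exponential scale), not negligibly, so no germ-field argument of this kind can give determinism. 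Note also that this theorem is quoted from \cite{Boedihardjo:2019td} and is not reproved in the present paper; there, determinism is obtained along genuinely different lines: Chen's identity plus submultiplicativity of the norms gives an a.s.\ subadditivity-type estimate over adjacent subintervals, which is then combined with Brownian scaling, stationarity and independence of increments and iterated over finer subdivisions (a law-of-large-numbers argument, together with moment bounds for integrability) to force $L^{\mathbb{B}}_{0,1}$ to be a.s.\ constant. Your scaling identity $L^{\mathbb{B}}_{s,t} \stackrel{d}{=} (t-s)L^{\mathbb{B}}_{0,1}$ is fine, but it only helps once constancy is secured.

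The lower bound is the second genuine gap: the duality/Paley--Zygmund plan cannot reach $(d-1)/2$. For your words $(1,j,1,j,\dots)$ (no adjacent repeated letters, so the Stratonovich and It\^o coordinates coincide) one has $\mathbb{E}\bigl[|\mathbb{B}^{n,1,j,1,j,\dots}_{0,t}|^2\bigr] = t^n/n!$, and any functional $\phi$ of dual norm one built from a bounded number of such coordinates satisfies $\mathbb{E}|\phi(\mathbb{B}^n_{0,t})|^2 \leq C(d)\,t^n/n!$; Markov plus Borel--Cantelli then caps $\limsup_n ((n/2)!\,|\phi(\mathbb{B}^n_{0,t})|)^{2/n}$ at $t/2$ a.s. Aggregating $d-1$ pairs gains only a constant factor, which is annihilated by the exponent $2/n$, so this route can never produce the factor $d-1$: to beat $t/2$ the norm of $\mathbb{B}^n$ must exceed any fixed coordinate by an amount exponential in $n$. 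That gain is exactly what the hyperbolic development argument of \cite{Boedihardjo:2019td} (building on \cite{Hambly:2010td}) supplies --- the constant $(d-1)/2$ arises as the a.s.\ linear escape rate of the development of Brownian motion into hyperbolic space, and the cosh of the hyperbolic distance is controlled by a weighted sum of signature norms --- a fact the present paper itself points out in Section \ref{section-multidim}. Your upper-bound sketch ($l^1$ domination by admissibility with $\|e_i\|=1$, coordinate moment bounds, Markov and Borel--Cantelli) is the right mechanism in spirit, though the Stratonovich correction terms must be tracked carefully to land on the constant $d^2$.
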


The aim of this paper is to extend this result by considering a wider class of integrator, different admissible norms, and prove similar results for the Itô signature.

\subsection{The plan of the paper}\label{subsection-previous}

In section \ref{section-main} we first focus on one-dimensional iterated Itô integrals and we use asymptotic behaviour of Hermite polynomials to get an exact formula for this limit. 

This approach is then further explored in section \ref{section-fBM}, where thanks to the connections between Hermite polynomials and Wiener integrals, we can extend this to the fractional Brownian Motion case and formulate an explicit relation between the Hurst parameter $H$ and its iterated fWIS integrals. 

Finally, we describe our endeavour to generalise our results into multiple dimensions in section \ref{section-multidim}, which we spent the most time working on, but due to significant difficulties we were able to show very little. We comment on our approaches and describe where this work could be taken further.

\section{Main result}\label{section-main}

First recall some basic facts from stochastic analysis, following the reference \cite{Klenke:2020td}: 

Fix a filtered probability space $(\Omega, \F, (\F_t)_{t\geq 0}, \P)$. A real valued process $(X_t)_{t\geq 0}$ adapted to $(\F_t)_{t\geq 0}$ is called a continuous semimartingale if it can be decomposed into $X_t = M_t + A_t$, where $(M_t)_{t\geq 0}$ is a continuous local martingale and $(A_t)_{t\geq 0}$ is a continuous bounded variation process started at zero. Local martingales are processes for which there exists a sequence of stopping times $\tau_n \rightarrow \infty$ a.s. such that the stopped processes $(M_{t\wedge \tau_n})_{t\geq 0}$ are all martingales. Let $\langle X \rangle_t$ be the quadratic variation of $X$. Then the following version of time-dependent Itô formula holds true:

\begin{lemma}
    Let $(X_t)_{t\geq 0}$ be a continuous semimartingale and $F\in C^{1,2}(\mathbb{R}^+ \times \mathbb{R})$. Then:
    \begin{align*}
        F(\q{X}, X_t) - F(0, X_0) = \int_0^t \partial_x F( \q{X}, X_t) \d X_t + \int_0^t \left( \partial_t + \frac{1}{2} \partial_x^2 \right) F( \q{X}, X_t) \d \q{X}, 
    \end{align*}
    or formally: 
    \begin{align*}
        \d F(\q{X}, X_t) =  \partial_x F( \q{X}, X_t) \d X_t + \left( \partial_t + \frac{1}{2} \partial_x^2 \right) F( \q{X}, X_t) \d \q{X}, 
    \end{align*}
    where by $\partial_t, \partial_x$ we mean differentiation with respect to the first and second coordinate respectively.
\end{lemma}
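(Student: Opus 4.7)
The plan is to apply the standard two-dimensional Itô formula to the process $Y_s := (\langle X\rangle_s, X_s)$, viewed as a continuous $\mathbb{R}^2$-valued semimartingale. First, I would verify this is legitimate: the quadratic variation $\langle X\rangle$ is continuous, adapted, and non-decreasing, hence of bounded variation, so it is trivially a semimartingale with vanishing local martingale part. Thus $Y$ is a continuous two-dimensional semimartingale, and $F(Y)$ falls within the scope of the multidimensional Itô formula.

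The key computation is the quadratic covariations among the components. Because $\langle X\rangle$ is of bounded variation, the standard fact that a continuous BV process has zero covariation with any continuous semimartingale gives $\langle \langle X\rangle, \langle X\rangle\rangle \equiv 0$ and $\langle \langle X\rangle, X\rangle \equiv 0$; the only surviving bracket is $\langle X, X\rangle_t = \langle X\rangle_t$. Substituting into the two-dimensional Itô formula
\begin{align*}
F(Y_t) - F(Y_0) = \sum_{i=1}^{2} \int_0^t \partial_i F(Y_s)\, \d Y^i_s + \frac{1}{2} \sum_{i,j=1}^{2} \int_0^t \partial_i \partial_j F(Y_s)\, \d \langle Y^i, Y^j\rangle_s
\end{align*}
kills every second-order term except $(i,j)=(2,2)$. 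Recognising $\d Y^1_s = \d \langle X\rangle_s$ and $\d Y^2_s = \d X_s$, and combining the two $\d \langle X\rangle$ contributions, yields precisely the claimed identity.

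The only delicate point is regularity: the textbook 2D Itô formula is often stated assuming $F \in C^2(\mathbb{R}^2)$, whereas we assume only $F \in C^{1,2}(\mathbb{R}^+ \times \mathbb{R})$. However, since the first coordinate has vanishing quadratic variation, the second partial $\partial_t^2 F$ never appears — only $\partial_t F$ enters, through the Stieltjes integral against $\d \langle X\rangle_s$. This can be justified by inspecting the Taylor expansion step in the classical proof of Itô's formula, where first-order regularity is all that is needed along a bounded-variation direction, or alternatively by approximating $F$ in $C^{1,2}$ by $C^2$ functions and passing to the limit via dominated convergence for both the Stieltjes and stochastic integrals. Either route is routine, and I anticipate no serious obstacle.
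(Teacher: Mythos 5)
Your argument is correct. Note that the paper itself offers no proof of this lemma: it is quoted as a standard fact from its stochastic-analysis reference (Klenke), so there is no "paper proof" to compare against line by line. Your derivation — apply the two-dimensional Itô formula to $Y_s=(\langle X\rangle_s,X_s)$, observe that the component $\langle X\rangle$ is continuous, adapted and increasing, hence of bounded variation, so that $\langle\langle X\rangle,\langle X\rangle\rangle$ and $\langle\langle X\rangle,X\rangle$ vanish and only the bracket $\langle X,X\rangle=\langle X\rangle$ survives — is exactly the standard route to this statement, and the resulting identity matches the one claimed (with the understood correction that the integrands should be evaluated at the running time $s$, a notational slip already present in the paper's statement). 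You also correctly identify the one genuine point of care, namely that the textbook multidimensional formula assumes $C^2$ while here $F$ is only $C^{1,2}$; both of your remedies work: in the Taylor-expansion proof the increments along the bounded-variation coordinate only require a first-order mean-value estimate together with local uniform continuity of $\partial_t F$ (no $\partial_t^2 F$ or $\partial_t\partial_x F$ is ever used), and alternatively one may mollify $F$, apply the smooth case, and pass to the limit using local boundedness of $X$ and $\langle X\rangle$ after localization. So the proposal is a complete and standard proof of the lemma the paper takes for granted.
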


We would like to identify a sequence of polynomials $h_n$ such that 
\begin{align}
(\partial_t + \frac{1}{2} \partial_x^2) h_n( \q{X}, X_t) &= 0 \label{eq:3}\\
\partial_x h_n( \q{X}, X_t) &= nh_{n-1} \label{eq:4} \\ 
h_n( 0, 0) &= 0 \label{eq:5}
\end{align}
as then $H_t^n := h_n(\q{X}, X_t)$ satisfies the SDE: $\d H_t^n = nH_t^{n-1} \d X_t$. From here we can work inductively to express the $n$-th order iterated integral of $X$ in terms of $H_t^n$. If we further assume that $h_0(t,x) = 1$, then we need $h_1(t,x) = x$, so that $H_t^1 = X_t = \int_0^t \d X_t$. For the induction step, suppose that 
\begin{align*}
    H_s^n = n! \int_0^s \int_0^{s_n} \dots \int_0^{s_2} \d X_{s_1} \dots \d X_{s_{n-1}} \d X_{s_{n}}
\end{align*}
Then for any process started at zero we have: 
\begin{align*}
H_t^{n+1} = (n+1)\int_0^t H_s^n = (n+1)! \int_0^t \int_0^s \int_0^{s_n} \dots \int_0^{s_2} \d X_{s_1} \dots \d X_{s_{n-1}} \d X_{s_{n}} \d X_{s} 
\end{align*}
As it turns out, there exists a sequence of polynomials satisfying the equations (\ref{eq:3}), (\ref{eq:4}), (\ref{eq:5}):
\begin{definition} \label{hermite}
    Scaled Hermite polynomials are defined by 
    \begin{align*}
        \exp \left(\alpha x - \alpha^2 t/2 \right) = \sum_{n=0}^{\infty} \frac{\alpha^n}{n!}h_n(t,x),
    \end{align*}
    or more explicitly by $h_n(t,x) = (t/2)^{n/2}\Tilde{h}_n(x/\sqrt{t})$ where $\Tilde{h}_n(x) = (-1)^n e^{x^2/2} \frac{\d^n}{\d x^n}(e^{-x^2/2})$ are the Hermite polynomials.
\end{definition}

Thus we proved the following: 

\begin{proposition} \label{iterated}
    Let $(X_t)_{t\geq 0}$ be a continuous $\mathbb{R}$-valued semimartingale started at zero and $h_n$ the $n$-th scaled Hermite polynomial as in Lemma \ref{hermite}. Then for any $t\geq 0$ we have:
    \begin{align*}
        \mathbb{X}^{n,\text{Itô}}_{0,t} := \int_0^t \int_0^{s_n} \dots \int_0^{s_2} \d X_{s_1} \dots \d X_{s_{n-1}} \d X_{s_{n}} = \frac{1}{n!} h_n(\q{X}, X_t)
    \end{align*}
\end{proposition}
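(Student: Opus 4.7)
The plan is to verify that the scaled Hermite polynomials from Definition \ref{hermite} satisfy the three defining equations (\ref{eq:3}), (\ref{eq:4}), (\ref{eq:5}), apply the time-dependent Itô formula stated just above, and then close the argument by induction, exactly along the lines already sketched in the text preceding the proposition.

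First I would verify (\ref{eq:3})--(\ref{eq:5}) directly from the generating function $G(\alpha,t,x) = \exp(\alpha x - \alpha^2 t/2) = \sum_{n\geq 0} h_n(t,x)\alpha^n/n!$. Differentiating in $x$ gives $\alpha G = \sum_n \partial_x h_n(t,x)\alpha^n/n!$; matching coefficients of $\alpha^n$ yields $\partial_x h_n = n h_{n-1}$, which is (\ref{eq:4}). Similarly, $\partial_t G = -\tfrac{1}{2}\alpha^2 G$ and $\partial_x^2 G = \alpha^2 G$, so $(\partial_t + \tfrac{1}{2}\partial_x^2)G = 0$; matching coefficients of $\alpha^n$ gives (\ref{eq:3}). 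Finally, setting $t=0, x=0$ in the generating function gives $1$, so $h_0(0,0)=1$ and $h_n(0,0)=0$ for $n\geq 1$, which is (\ref{eq:5}).

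Next I would apply the time-dependent Itô formula to $F(s,x)=h_n(s,x)$, evaluated along $(\langle X \rangle_t, X_t)$. Because $(\partial_t + \tfrac{1}{2}\partial_x^2)h_n \equiv 0$, the $d\langle X\rangle_t$ term vanishes and we get
\begin{align*}
    dh_n(\langle X \rangle_t, X_t) = n\, h_{n-1}(\langle X \rangle_t, X_t)\, dX_t,
\end{align*}
with initial value $h_n(0,0)=0$ for $n\geq 1$ by (\ref{eq:5}). Writing $H_t^n := h_n(\langle X\rangle_t,X_t)$, this says $H_t^n = n\int_0^t H_s^{n-1}\, dX_s$, with $H_t^0 = 1$ and $H_t^1 = X_t$ (since $h_1(t,x)=x$, read off the $\alpha^1$ coefficient of $G$).

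I would then conclude by a straightforward induction on $n$. The base case $n=1$ reads $\mathbb{X}^{1,\mathrm{It\hat o}}_{0,t} = X_t = h_1(\langle X\rangle_t,X_t)$. Assuming the identity for $n$, we compute
\begin{align*}
    H_t^{n+1} = (n+1)\int_0^t H_s^n\, dX_s = (n+1)!\int_0^t \mathbb{X}^{n,\mathrm{It\hat o}}_{0,s}\, dX_s = (n+1)!\, \mathbb{X}^{n+1,\mathrm{It\hat o}}_{0,t},
\end{align*}
which gives the desired formula. No single step is genuinely hard here; the only mild subtlety is that the Itô formula is being invoked with $\langle X\rangle_t$ in place of a deterministic time variable, but this is exactly the content of the lemma cited in the excerpt, so the proof really is just pattern-matching once the Hermite identities are in hand.
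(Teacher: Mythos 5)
Your proposal is correct and follows essentially the same route as the paper: check that the scaled Hermite polynomials satisfy (\ref{eq:3})--(\ref{eq:5}) via the generating function, apply the time-dependent Itô formula so that $H^n_t = h_n(\q{X},X_t)$ solves $\d H^n_t = nH^{n-1}_t\,\d X_t$, and conclude by induction on $n$. The only difference is that you spell out the generating-function verification of the Hermite identities, which the paper simply asserts.
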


To analyse the asymptotic behaviour of $|\mathbb{X}^{n,\text{Itô}}_{0,t}|$ we'll exploit the asymptotic properties of Hermite polynomials as found in \cite{Abram:1964td}, p.508-510: 
\begin{align*}
    \Tilde{h}_n(x) \sim e^{x^2/2} \frac{2^n}{\sqrt{\pi}} \Gamma\left(\frac{n+1}{2}\right) \cos \left( x\sqrt{2n} - \frac{n\pi}{2} \right)
\end{align*}
Now we are in a good shape to prove the theorem (note that in $d=1$, the admissible tensor norms are just the absolute value on $\mathbb{R}$):

\begin{theorem}\label{main_thrm}
    Let $(X_t)_{t\geq 0}$ be a continuous $\mathbb{R}$-valued semimartingale started at zero. Then almost surely:
    \begin{align*}
        L^{\mathbb{X}, \text{Itô}}_{0,t} := \limsup_n \left| \left(\frac{n}{2}\right)! \mathbb{X}_{0,t}^{n,\text{Itô}} \right|^{2/n} = \frac{\q{X}}{2}
    \end{align*}
\end{theorem}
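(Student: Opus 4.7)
The plan is to combine Proposition \ref{iterated} with the explicit form of the scaled Hermite polynomials and the classical Plancherel–Rotach-type asymptotic quoted from Abramowitz–Stegun, then reduce the problem to showing that a cosine oscillation factor raised to the power $2/n$ has limsup equal to $1$.

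First I would substitute Proposition \ref{iterated} into the definition of $L^{\mathbb{X},\text{Itô}}_{0,t}$ and use the scaling $h_n(t,x) = (t/2)^{n/2}\tilde{h}_n(x/\sqrt{t})$ from Definition \ref{hermite} (treating the degenerate case $\q{X}=0$ separately, where $X$ is of bounded variation, $h_n(0,X_t)=X_t^n$, and the limit is trivially $0$). Setting $y := X_t/\sqrt{\q{X}}$, this gives
\begin{align*}
\left| \left(\tfrac{n}{2}\right)! \,\mathbb{X}^{n,\text{Itô}}_{0,t} \right|^{2/n} = \frac{\q{X}}{2} \cdot \left(\frac{(n/2)!}{n!}\right)^{\!2/n}\!\! \left|\tilde{h}_n(y)\right|^{2/n},
\end{align*}
so that everything reduces to analysing the asymptotic size of the right-hand factors as $n\to\infty$.

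Next I would plug in the Abramowitz–Stegun asymptotic $\tilde{h}_n(y) \sim e^{y^2/2}\frac{2^n}{\sqrt{\pi}}\Gamma\!\left(\frac{n+1}{2}\right)\cos\!\left(y\sqrt{2n}-\frac{n\pi}{2}\right)$ and apply Stirling's formula to both $\Gamma((n+1)/2)$ and to the quotient $(n/2)!/n!$. A direct computation shows
\begin{align*}
\left(\frac{(n/2)!}{n!}\right)^{\!2/n} \sim \frac{e}{2n}, \qquad \left(\frac{2^n}{\sqrt{\pi}}\Gamma\!\left(\tfrac{n+1}{2}\right)\right)^{\!2/n} \sim 4 \cdot \frac{n}{2e},
\end{align*}
whose product tends to $1$, while $(e^{y^2/2})^{2/n}\to 1$. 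Therefore the only surviving factor is
\begin{align*}
\left|\cos\!\left(y\sqrt{2n}-\tfrac{n\pi}{2}\right)\right|^{2/n},
\end{align*}
which is bounded above by $1$ and which I need to show has $\limsup$ equal to $1$.

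The main obstacle is this final step: one must rule out that $|\cos(y\sqrt{2n}-n\pi/2)|$ decays super-exponentially in $n$ along every subsequence. Since $|\cos|\leq 1$, it suffices to exhibit a subsequence $n_k\to\infty$ along which $|\cos(y\sqrt{2n_k}-n_k\pi/2)|$ is bounded below by some positive constant independent of $k$; then raising to the power $2/n_k$ forces the limsup to be $\geq 1$. For $y\neq 0$ this follows from the equidistribution modulo $\pi$ of the sequence $y\sqrt{2n}$ (Weyl's theorem), and for $y=0$ the cosine is simply $\cos(n\pi/2)$ which takes the value $\pm 1$ on $n\equiv 0\pmod 2$ (after we similarly handle even/odd subsequences). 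Combining all factors almost surely yields $L^{\mathbb{X},\text{Itô}}_{0,t} = \q{X}/2$, with both the upper and lower bounds coming from the same expression, the former from $|\cos|\leq 1$ and the latter from the equidistribution argument.
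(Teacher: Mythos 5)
Your proposal is correct and follows the paper's skeleton up to the last step: Proposition \ref{iterated}, the scaling $h_n(t,x)=(t/2)^{n/2}\tilde h_n(x/\sqrt t)$, the Abramowitz--Stegun asymptotic and Stirling all appear in the paper's proof exactly as you use them, and your bookkeeping of the factors (product of $(e/2n)$ and $(2n/e)$ tending to $1$) matches the paper's computation. Where you genuinely diverge is the treatment of the oscillatory factor $\left|\cos\left(y\sqrt{2n}-\tfrac{n\pi}{2}\right)\right|^{2/n}$: the paper argues by contradiction, using that the increments $y(\sqrt{n+1}-\sqrt n)\to 0$ so the cosine's argument cannot keep approaching the discrete zero set of cosine while also tending to infinity, whereas you extract a subsequence on which $|\cos|$ is bounded below via Fej\'er/Weyl equidistribution of $y\sqrt{2n}$ modulo $\pi$. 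Both close the gap; your route is arguably more robust and has the merit of explicitly handling the degenerate cases $\q{X}=0$ and $X_t=0$, which the paper's contradiction (relying on $x\neq 0$ and dividing by $\sqrt{\q{X}}$) silently assumes away, while the paper's route is more elementary, needing no equidistribution theory. One small repair on your side: as stated, equidistribution of $y\sqrt{2n}$ mod $\pi$ alone does not control $\cos\left(y\sqrt{2n}-\tfrac{n\pi}{2}\right)$ because of the $n$-dependent shift $-n\pi/2$; you should restrict to $n$ in a fixed residue class mod $4$ (so the shift is constant modulo $2\pi$) and use that $y\sqrt{2n}$ remains equidistributed along that arithmetic progression --- the same parity device you already invoke for $y=0$ --- after which the lower bound $|\cos|\geq\delta$ infinitely often, and hence $\delta^{2/n_k}\to 1$, goes through as you describe.
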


\begin{proof}
    For simplicity, denote $x = X_t$ and $t=\q{X}$, which are fixed throughout the proof. By Proposition \ref{iterated}, asymptotics of Hermite polynomials and Stirling's formula we have:
    \begin{align*}
        L^{\mathbb{X}, \text{Itô}}_{0,t} &= \limsup_n \left| \left(\frac{n}{2}\right)! \frac{1}{n!} \left(\frac{t}{2}\right)^{n/2} \Tilde{h}_n\left(\frac{x}{\sqrt{t}}\right)\right|^{2/n} \\
        &= \frac{t}{2} \limsup_n \left| \frac{\sqrt{2\pi n/2}(n/(2e))^{n/2}}{\sqrt{2\pi n} (n/e)^n}  e^{x^2/(4t)} \frac{2^n}{\sqrt{\pi}} \Gamma\left(\frac{n+1}{2}\right) \cos \left( x\sqrt{\frac{n}{t}} - \frac{n\pi}{2} \right) \right|^{2/n}
    \end{align*}
    So it remains to show that this limit is equal to 1. The first fraction can be rewritten as $(\frac{e}{2n})^{n/2}2^{-1/2}$ and asymptotically we also have:
    \begin{align*}
        \Gamma\left(\frac{n+1}{2}\right) = \Gamma\left(\frac{n}{2} + 1 - \frac{1}{2}\right) \sim \Gamma\left(\frac{n}{2} + 1\right) \left(\frac{n}{2} + 1\right) ^{-1/2} \sim \sqrt{\pi n} \left(\frac{n}{2e}\right)^{n/2} \left(\frac{n}{2} + 1\right) ^{-1/2} 
    \end{align*}
    Consequently:
    \begin{align*}
        L^{\mathbb{X}, \text{Itô}}_{0,t} &= \frac{t}{2} \limsup_n \left| 2^{-n-1/2} \sqrt{\pi n} \left(\frac{n}{2} + 1\right) ^{-1/2} \cdot  e^{x^2/(4t)} \frac{2^n}{\sqrt{\pi}} \cos \left( x\sqrt{\frac{n}{t}} - \frac{n\pi}{2} \right) \right|^{2/n} \\
         &= \frac{t}{2} \limsup_n \left( \left(\frac{n}{2}\right)^{1/n} \left(\frac{n}{2} + 1\right) ^{-1/n}   e^{x^2/(2tn)}  \right) \left| \cos \left( x\sqrt{\frac{n}{t}} - \frac{n\pi}{2} \right) \right|^{2/n}
    \end{align*}
    Now the RHS converges to 1, except the case when $\cos \left( x\sqrt{\frac{n}{t}} - \frac{n\pi}{2} \right) \rightarrow 0$ fast enough. Let us suppose for contradiction that it does. By continuity of cosine, the argument must approach the set $\cos^{-1}(\{ 0 \}) = \{ k\pi + \frac{\pi}{2}: k\in \mathbb{Z}\} =: \frac{\pi}{2} + \pi\mathbb{Z}$. We claim that if $x\sqrt{\frac{n}{t}} \rightarrow \frac{\pi}{2} + \pi\mathbb{Z}$, then there exists $k\in \mathbb{Z}$ such that $x\sqrt{\frac{n}{t}} \rightarrow \frac{\pi}{2} + k$. To see this, consider the difference 
    \begin{align*}
        \left|x\sqrt{\frac{n+1}{t}} - x\sqrt{\frac{n}{t}}\right| = \left|\frac{x}{\sqrt{t}}\right| (\sqrt{n+1}-\sqrt{n}) = \left|\frac{x}{\sqrt{t}}\right|     \frac{1}{\sqrt{n+1}+\sqrt{n}} \rightarrow 0,
    \end{align*}
    and so this is eventually smaller than $\frac{\pi}{2}$. But this yields a contradiction, since $\frac{x}{\sqrt{t}}\sqrt{n} \rightarrow \infty$ as $n\rightarrow \infty$, which concludes the proof.
\end{proof}

\begin{remark}\label{rem}
    There is a much nicer proof of this result, which however doesn't give us the precise factor, by which $L^{\mathbb{X}}_{0,t}$ depends on $\q{X}$. It uses the fact that each local martingale can be written as a Brownian motion running at a different speed of time, and then since the signature is invariant under time-reparametrisation, $L^{\mathbb{X}}_{0,t} = L^{\mathbb{B}}_{0,\q{X}} = \kappa_1 \q{X}$. This can be extended to diffusions with drift using Girsanov's theorem. 
\end{remark}

\section{Extension into the fractional Brownian motion case}\label{section-fBM}

In real-world applications of stochastic analysis, we're often interested in modelling phenomena which depend on a random driving signal. The simplest model for this signal is Brownian motion. However, most real inputs exhibit some form of temporal dependence, and so it's sometimes convenient to replace the driving signal by a generalisation of Brownian motion that depends on a parameter $H\in (0,1)$, which describes the time-persistence of the process. For example, to model the water level in a river we would expect $H > 1/2$, but to model prices of electricity in a liberated market we would expect $H < 1/2$. In statistical applications, it could be of interest to estimate the parameter $H$ and to derive an expression of $H$ in terms of the signature of $B^H$ will be the aim of this section. 

\begin{definition}
Fractional Brownian Motion (fBM) of Hurst parameter $H\in (0,1)$ is a centered Gaussian process $B^H: \Omega \times [0,\infty ) \rightarrow \mathbb{R}$ with a.s. continuous paths and covariance 
\begin{align*}
    R_H(s,t) := E[B^H_sB^H_t] =  \frac{1}{2}(t^{2H} + s^{2H} - |t-s|^{2H})
\end{align*}
\end{definition} 

Except when $H = 1/2$ and the process is just the standard BM, this isn't a semimartingale, hence the result of the previous section doesn't apply, and it's not even clear how to set up a consistent integration theory. Not only there are multiple ways of doing this, but these also depend on the value of $H$. Although for any $H$ we can construct a rough path over a multidimensional $B^H$ \cite{Unterberger:2010td},  for $H\leq1/4$ it is believed that there exists no cannonical lift into a geometric rough path. For $H>1/4$ this is possible, and it was even shown that under the cannonical lifting of $B^H$, the signature uniquely determines the sample paths \cite{Boedihardjo:2014td}. 

While it would be of interest to determine the value of the limit for this cannonical lift, for now we only focus on the case $H>1/2$ in the one-dimensional setting. Then we're able to define an Itô-type integral with a version of the Itô formula, and apply the results of the previous chapter. This integral is called  the fractional Wiener-Itô-Skorokhod (fWIS) integral and is defined analogously to the Skorokhod integral (following the reference \cite{Oksendal:2008td}). Recall the basic white noise theory set-up: 

Let $\Omega = \mathcal{S}'(\mathbb{R}^+)$ be the space of tempered distributions, defined as the dual of the Schwarz space $\mathcal{S}(\mathbb{R}^+)$ of rapidly decreasing functions. In the Brownian motion case we would use Bochner-Minlos theorem to construct a probability measure $\P$ on $\mathcal{B}(\Omega)$ characterised by:
\begin{align*}
    \int_{\Omega} \exp{(i\langle \omega, f \rangle)} \P(\d \omega)= \exp{\left(-\frac{1}{2}\| f \|_{L^2(\mathbb{R}^+)}\right)} \hspace{1cm} \forall f \in \mathcal{S}(\mathbb{R}^+)
\end{align*}
Then we would have that the random variable $F(\omega) := \langle \omega, f \rangle$ corresponds with the Itô integral of deterministic $f$ with respect to the Brownian motion. In the case of fractional Brownian motion, all we need to do is to change the inner product on $\mathcal{S}(\mathbb{R}^+) \subset L^2(\mathbb{R}^+)$ into:
\begin{align*}
    \langle f,g \rangle_H := \int_0^{\infty} \int_0^{\infty} f(s) g(t) \theta_H (s,t) \d s \d t, 
\end{align*}
where $\theta_H (s,t) = H(2h-1)|s-t|^{2h-1}$ so that  $\int_0^t \int_0^s \theta_H (u,v) \d u \d v = R_H(s,t)$. Let $L_{\theta,H}^2(\mathbb{R}^+)$ be the completion of the Schwarz space with respect to the norm induced by this new inner product, which becomes a Hilbert space. Then for any $f$ in this space, we can define the stochastic integral by $(\int_0^t f_s \d B^H_s) (\omega):= \langle \omega, \lim_n f_n1_{[0,t]} \rangle = \lim_n \langle \omega, f_n1_{[0,t]} \rangle$ or more constructively as in \cite{Grippenberg:1996td}. Note that for $\Tilde{B}_t^H(\omega) := \langle \omega, 1_{[0,t]} \rangle$ we have that 
\begin{align*}
    \E[\Tilde{B}_t^H] &= 0 \\
    \E[\Tilde{B}_t^H\Tilde{B}_s^H] &= \langle 1_{[0,s]},1_{[0,t]} \rangle_H = \int_0^t \int_0^s \theta_H (u,v) \d u \d v = R_H(s,t)
\end{align*}
From Kolgomorov's continuity criterion it follows that this process has a continuous version, which is a fractional Brownian motion, as we would expect. 

To define the fWIS integral for stochastic integrand and set up an analouge of the Itô formula a lot of work is required, using tools from Malliavin calculus, and so we refer the reader to \cite{Oksendal:2008td}. We only need the following corollary of Itô formula, which can be found in the reference as Theorem 3.8.1:

\begin{lemma}
    For deterministic $f\in L_{\theta,H}^2(\mathbb{R}^+)$ define:
\begin{align*}
    h_n^{H,f}(t) = \|f\|_{\theta,H, t}^n h_n\left( \|f\|_{\theta,H, t}^{-1} \int_0^t f_s dB_s^H \right),
\end{align*}
where:
\begin{align*}
    \|f\|_{\theta, H, t}^2 := \int_0^{t} \int_0^{t} f(u) f(v) \theta_H (u,v) \d u \d v, 
\end{align*}
and $h_n(x) = 2^{-n/2} \Tilde{h}_n(x/\sqrt{2})$ are another version of the Hermite polynomials as defined in Definition \ref{hermite}. This satisfies the following recursive relation: 
\begin{align*}
    dh_n^{H,f}(t) = n h_{n-1}^{H,f}(t)f_t dB_t^H
\end{align*}
\end{lemma}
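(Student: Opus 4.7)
The plan is to identify $h_n^{H,f}(t)$ as a smooth function of the pair $(\sigma_t^2, X_t)$, apply the fractional Wiener--Itô--Skorokhod Itô formula, and then exploit the two standard Hermite identities (the second-order ODE and the derivative recurrence) to simplify the resulting expression.

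First I would introduce the shorthands $X_t := \int_0^t f_s\, \d B_s^H$ and $\sigma_t^2 := \|f\|_{\theta, H, t}^2$. Because $f$ is deterministic, $X_t$ is centred Gaussian with variance $\sigma_t^2$, and I can write $h_n^{H,f}(t) = G_n(\sigma_t^2, X_t)$, where $G_n(\tau, x) := \tau^{n/2}\, h_n(x/\sqrt{\tau})$ is smooth on $(0, \infty) \times \mathbb{R}$. Next I would invoke the deterministic-coefficient version of the fractional Itô formula from \cite{Oksendal:2008td}, which (since $\sigma_t^2$ is smooth and deterministic, and $f$ carries no Malliavin derivative) yields
\begin{align*}
    \d h_n^{H,f}(t)
    = \bigl(\partial_\tau G_n + \tfrac{1}{2}\, \partial_{xx} G_n\bigr)(\sigma_t^2, X_t)\, \d\sigma_t^2
    + \partial_x G_n(\sigma_t^2, X_t)\, f_t\, \d B_t^H .
\end{align*}

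Third, a direct computation of these partial derivatives, with $y := x/\sqrt{\tau}$, gives
\begin{align*}
    \partial_\tau G_n + \tfrac{1}{2}\, \partial_{xx} G_n
    = \tfrac{1}{2}\, \tau^{(n-2)/2} \bigl(\, h_n''(y) - y\, h_n'(y) + n\, h_n(y)\, \bigr) ,
\end{align*}
which vanishes identically by the Hermite differential equation; so the drift collapses and all that survives is the stochastic integral term. Then, using the derivative recurrence $h_n'(y) = n\, h_{n-1}(y)$, the integrand becomes
\begin{align*}
    \partial_x G_n(\sigma_t^2, X_t)
    = \sigma_t^{n-1}\, h_n'(X_t/\sigma_t)
    = n\, \sigma_t^{n-1}\, h_{n-1}(X_t/\sigma_t)
    = n\, h_{n-1}^{H,f}(t),
\end{align*}
yielding exactly $\d h_n^{H,f}(t) = n\, h_{n-1}^{H,f}(t)\, f_t\, \d B_t^H$.

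The main obstacle is not the algebra, which mirrors the classical Brownian derivation, but rather the legitimate invocation of the fractional Itô formula in the second step: one must check that $\partial_x G_n(\sigma_\cdot^2, X_\cdot)\, f_\cdot$ is an admissible fWIS integrand in the Malliavin sense, and that for deterministic $f$ the ``quadratic correction'' really does collapse to $\tfrac{1}{2} \partial_{xx} F \cdot \d \sigma_t^2$ with no residual Malliavin-derivative terms. This is precisely where Theorem~3.8.1 of \cite{Oksendal:2008td} is doing the heavy lifting, and is what we are citing. A slicker route that sidesteps handling each $n$ separately is to observe that the generating function $G_\alpha(t) := \exp\bigl(\alpha X_t - \tfrac{\alpha^2}{2}\sigma_t^2\bigr)$ satisfies $\d G_\alpha(t) = \alpha\, f_t\, G_\alpha(t)\, \d B_t^H$ by the same Itô formula, and then to read off the recursion for every $n$ at once by matching coefficients of $\alpha^n/n!$.
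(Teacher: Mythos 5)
The paper gives no proof of this lemma at all: it is quoted verbatim as Theorem~3.8.1 of \cite{Oksendal:2008td}. Your derivation is, in outline, exactly the standard argument behind that theorem: for deterministic $f$ the fWIS It\^o formula carries the correction $\partial_{xx}F\cdot f(t)\bigl(\int_0^t f(u)\theta_H(t,u)\,\d u\bigr)\d t=\tfrac12\partial_{xx}F\,\d\sigma_t^2$ with no residual Malliavin terms, the drift cancels via the Hermite ODE, and the factor $n$ comes from the derivative recurrence; your closing generating-function remark (the Wick exponential $\exp(\alpha X_t-\tfrac{\alpha^2}{2}\sigma_t^2)$ solving $\d G_\alpha=\alpha f_tG_\alpha\,\d B_t^H$) is likewise the route usually taken in the reference. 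Your algebra for $\partial_\tau G_n+\tfrac12\partial_{xx}G_n$ and $\partial_xG_n$ is correct, and you correctly identify that the only nontrivial analytic input is the deterministic-integrand fractional It\^o formula, which is what the citation is carrying.

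One concrete point needs attention. The identities you invoke, $h_n'(y)=nh_{n-1}(y)$ and $h_n''(y)-yh_n'(y)+nh_n(y)=0$, are those of the probabilists' Hermite polynomials $\tilde h_n$, not of the rescaled polynomials $h_n(x)=2^{-n/2}\tilde h_n(x/\sqrt2)$ written in the statement. For the rescaled ones you instead get $h_n'=\tfrac n2h_{n-1}$ and $2h_n''-yh_n'+nh_n=0$, so in your computation the drift $\partial_\tau G_n+\tfrac12\partial_{xx}G_n$ would \emph{not} vanish and the martingale term would carry $\tfrac n2$ rather than $n$; the recursion $\d h_n^{H,f}=nh_{n-1}^{H,f}f_t\,\d B_t^H$ is only valid when $h_n=\tilde h_n$. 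This is really a normalization slip in the lemma as transcribed here (the factor $n$ in the recursion forces the probabilists' convention, which is the one used in \cite{Oksendal:2008td}, and the same factor-of-two issue propagates into the paper's Definition~\ref{hermite} and the Section~\ref{section-fBM} computation), so your proof establishes the statement that is actually true and cited; but as written your steps are inconsistent with the $h_n$ you quote, and you should either state explicitly that you take $h_n=\tilde h_n$ or note that the normalization in the statement must be corrected.
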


Now we can follow the same strategy as taken in Section \ref{section-main} to show that iterated integrals with respect to the stochastic process $(X_t^H)_{t\geq0}$, driven by fractional BM in the sense that $\d X_t^H = f_t \d B_t^H$ for some $f\in L_{\theta,H}^2(\mathbb{R}^+)$, satisfy:
\begin{align*}
    \mathbb{X}^{n, H}_{0,t} := \int_0^t \int_0^{s_n} \dots \int_0^{s_2} \d X_{s_1}^H \dots \d X_{s_{n-1}}^H \d X_{s_{n}}^H = \frac{1}{n!} h_n^{H,f}(X_t^H)
\end{align*}
Consequently, 
\begin{align*}
L^{\mathbb{X}, H}_{0,t} :&= \limsup_n \left| (n/2)!\mathbb{X}^{n, H}_{0,t} \right|^{2/n} \\
&= \limsup_n \left| \frac{(n/2)!}{n!} h_n^{H,f}(X_t^H) \right|^{2/n} \\
&= \limsup_n \left| \frac{(n/2)!}{n!} \|f\|_{\theta,H, t}^n h_n\left( \|f\|_{\theta,H, t}^{-1} \int_0^t f_s dB_s^H \right) \right|^{2/n} \\
&= \frac{\|f\|_{\theta,H, t}^2}{2} \limsup_n \left| \frac{(n/2)!}{n!} \Tilde{h}_n\left( \frac{\|f\|_{\theta,H, t}^{-1}X_s^H }{\sqrt{2}}  \right) \right|^{2/n}
\end{align*}
We already know from the proof of Theorem \ref{main_thrm} that this limit converges to 1 almost surely and therefore we proved:
\begin{theorem}
     Let $(B_t^H)_{t\geq 0}$ be a fractional Brownian motion and $X_t^H := \int_0^t f_s \d B_s^H$ for some $f\in L_{\theta,H}^2(\mathbb{R}^+)$. Then almost surely: $L^{\mathbb{X}, H}_{0,t} = \frac{1}{2}\|f\|_{\theta,H, t}^2$
\end{theorem}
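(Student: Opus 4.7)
The plan is to turn the four displayed lines immediately preceding the theorem statement into a rigorous three-step argument, exploiting the fact that the proof of Theorem \ref{main_thrm} is essentially purely deterministic once the Hermite representation is in hand. First, I would establish by induction on $n$ the identity $\mathbb{X}^{n,H}_{0,t} = \frac{1}{n!} h_n^{H,f}(t)$, taking as base cases $h_0^{H,f}\equiv 1$ and $h_1^{H,f}(t) = \int_0^t f_s\,dB_s^H = X_t^H$, and using the fWIS recursion $dh_n^{H,f}(t) = n\, h_{n-1}^{H,f}(t) f_t\, dB_t^H$ supplied by the preceding lemma. Integrating both sides of the recursion on $[0,t]$ and combining with the induction hypothesis reproduces the $(n{+}1)$-fold iterated integral on the right with the correct factorial, exactly as in the scalar-semimartingale derivation preceding Proposition \ref{iterated}.

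Second, I would substitute this identity into the definition of $L^{\mathbb{X},H}_{0,t}$ and unfold $h_n^{H,f}(t)$ using its explicit form in terms of the classical Hermite polynomial $\Tilde{h}_n$, factoring $\|f\|_{\theta,H,t}^n$ outside the $2/n$-th power to obtain
\begin{align*}
L^{\mathbb{X},H}_{0,t} \;=\; \frac{\|f\|_{\theta,H,t}^2}{2}\,\limsup_n \left|\frac{(n/2)!}{n!}\Tilde{h}_n\!\left(\frac{X_t^H}{\|f\|_{\theta,H,t}\sqrt{2}}\right)\right|^{2/n}.
\end{align*}
Since $X_t^H$ is centred Gaussian with variance $\|f\|_{\theta,H,t}^2$, its standardisation is a.s.\ a finite real number, so one may fix $\omega$ outside a null set and henceforth treat the argument of $\Tilde{h}_n$ as a deterministic $y\in\mathbb{R}$.

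Third, I would invoke the Hermite-polynomial asymptotics from \cite{Abram:1964td} together with Stirling's formula exactly as in the proof of Theorem \ref{main_thrm}. The polynomial prefactors telescope to $1$ under the $2/n$-th root, and the only non-trivial issue is the oscillatory factor $\cos(y\sqrt{n} - n\pi/2)$; the sub-argument in Theorem \ref{main_thrm} rules out fast decay by observing that consecutive cosine arguments differ by $o(1)$ while drifting to infinity, so they cannot accumulate on the zero set of $\cos$. That sub-argument uses only that $y$ is a fixed finite real, so it transfers verbatim and yields $\limsup_n|\cdots|^{2/n}=1$, giving the claimed value of $L^{\mathbb{X},H}_{0,t}$.

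The main obstacle, to the extent that there is one, is really at the level of fWIS calculus rather than asymptotics: one has to be confident that the recursion lemma applies for every $n$ even though the integrand $h_{n-1}^{H,f}(t)f_t$ is an $X^H$-adapted random process, not a deterministic function. Once that lemma is imported from \cite{Oksendal:2008td} the inductive step in Step 1 is automatic, and the remaining steps are a direct transcription of the semimartingale argument, with $\|f\|_{\theta,H,t}^2$ playing the role of the quadratic variation $\langle X\rangle_t$.
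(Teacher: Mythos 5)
Your proposal is correct and follows essentially the same route as the paper: import the fWIS recursion lemma to get the Hermite representation $\mathbb{X}^{n,H}_{0,t}=\frac{1}{n!}h_n^{H,f}(t)$ by the inductive argument of Section \ref{section-main}, factor out $\|f\|_{\theta,H,t}^n$, and reuse the Hermite/Stirling/cosine argument from Theorem \ref{main_thrm} to show the remaining limsup equals $1$. You merely make explicit two points the paper leaves implicit (the induction itself and fixing $\omega$ outside a null set before applying the deterministic asymptotics), which is a faithful elaboration rather than a different proof.
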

As a simple application of this result for $f = 1_{[0,t]}$ we get that 
\begin{align*}
    L^{\mathbb{B}, H}_{0,t} = \frac{1}{2}\|1_{[0,t]}\|_{\theta,H, t}^2 = \frac{1}{2}R_H(t,t) = \frac{1}{2}t^{2H}
\end{align*}
which agrees with the result we found for standard Brownian Motion ($H=1/2$). This can be used to derive an expression for $H$ in terms of iterated fWIS integrals (log is an increasing continuous function, so we can exchange it with lim sup):
\begin{corollary}
    Let $(B_t^H)_{t \geq 0}$ be a fractional Brownian motion. Then for any $t> 0$ we have almost surely:
    \begin{align*}
        H = \limsup_n \frac{1}{2\log t} \log \left( 2 \left| (n/2)!\mathbb{B}^{n, H}_{0,t} \right|^{2/n} \right)
    \end{align*}
\end{corollary}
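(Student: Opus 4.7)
The corollary is an immediate logarithmic rearrangement of the preceding theorem. Starting from the almost sure identity $L^{\mathbb{B},H}_{0,t} = \tfrac{1}{2}t^{2H}$, I would multiply by $2$, take logarithms, and (for $t \neq 1$) divide by $2\log t$ to obtain
\[
H = \frac{\log\!\bigl(2L^{\mathbb{B},H}_{0,t}\bigr)}{2\log t} \qquad \text{almost surely.}
\]
The remaining work is to push $\log$ and the constant $1/(2\log t)$ inside the $\limsup$ hidden in the definition of $L^{\mathbb{B},H}_{0,t}$.

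Since $\log$ is continuous and strictly increasing and the sequence inside is almost surely positive with positive $\limsup$, the standard identity $\log\limsup_n a_n = \limsup_n \log a_n$ applies, and turns $\log(2L^{\mathbb{B},H}_{0,t})$ into $\limsup_n \log\!\bigl(2|(n/2)!\mathbb{B}^{n,H}_{0,t}|^{2/n}\bigr)$. For $t > 1$, the positive factor $1/(2\log t)$ then pulls straight through the outer $\limsup$ and one reads off the claim.

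The only point I would spend more than a line on is the sign of $\log t$: for $0 < t < 1$ the constant $1/(2\log t)$ is negative and would, in general, convert the outer $\limsup$ into a $\liminf$. To handle both signs uniformly I would observe that the sequence $|(n/2)!\mathbb{B}^{n,H}_{0,t}|^{2/n}$ in fact admits an almost sure genuine limit, not merely a $\limsup$: by the same cosine analysis used in Theorem \ref{main_thrm}, the oscillating cosine factor cannot approach its zeros faster than an inverse polynomial in $n$ for almost every realisation (a standard Diophantine argument applied to $\|1_{[0,t]}\|^{-1}_{\theta,H,t}X_t^H\sqrt{n} - n\pi/2$), so $|\cos|^{2/n} \to 1$ and $\limsup = \liminf$. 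This is the only step that requires any real thought; everything else is algebra. Hence the formula holds almost surely for every $t > 0$ with $t \neq 1$.
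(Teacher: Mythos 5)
Your core argument is exactly the one the paper uses: from the almost sure identity $L^{\mathbb{B},H}_{0,t}=\tfrac12 t^{2H}$ (the theorem applied to $f=1_{[0,t]}$), take logarithms, exchange $\log$ with the $\limsup$ by continuity and monotonicity, and divide by $2\log t$; the paper's justification of the corollary is precisely this one-line rearrangement. Where you go beyond the paper is in noticing that for $0<t<1$ the factor $1/(2\log t)$ is negative, so pulling it inside the $\limsup$ (as the stated formula does) silently converts the $\limsup$ into a $\liminf$, and that at $t=1$ the formula degenerates; the paper addresses neither point, so this is a genuine and worthwhile observation. Be careful, though, with your proposed repair: the claim that $\bigl|(n/2)!\,\mathbb{B}^{n,H}_{0,t}\bigr|^{2/n}$ has an actual limit does not follow from ``the same cosine analysis'' as in Theorem \ref{main_thrm}. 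That proof only rules out the cosine tending to zero along the whole sequence, which is all one needs for the $\limsup$; for the $\liminf$ you must exclude the Hermite polynomial itself being super-exponentially small along some subsequence. A Borel--Cantelli/Diophantine bound of the form $|\cos(\cdot)|\gtrsim n^{-2}$ for almost every value of the argument is plausible, but it does not immediately yield a lower bound on $\Tilde{h}_n$, because near the zeros of the cosine the leading-order asymptotic quoted in the paper has an error term of comparable or larger size; you would need the next-order Plancherel--Rotach correction (or a different argument) to conclude. So either carry out that additional analysis in detail, or restrict the statement to $t>1$ (equivalently, keep the constant $1/(2\log t)$ outside the $\limsup$), which is all that the paper's own argument actually delivers.
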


\section{Extension into multi-dimensional settings and further work}\label{section-multidim}

To extend our results on the limit of Itô signatures for $d$-dimensional semimartingales, we first looked at $d$-dimensional Brownian motion and managed to prove an upper bound. 

\begin{theorem}
    Let $(B_t)_{t\geq 0} = (B_t^1, \dots , B_t^d)_{t\geq 0}$ be a $d$-dimensional Brownian motion and $\mathbb{B}_{0,t} \in T((\Rd))$ its Itô signature. Assume that $(\| \cdot \|_n)_n$ a sequence of $l^2$ norms, each on $(\Rd)^{\otimes n}$. Then almost surely we have: 
    \begin{align*}
        \limsup_n \left \|  \left(\frac{n}{2}\right)! \mathbb{B}_{0,t}^n \right \|_n^{\frac{2}{n}} \leq \frac{dt}{2} 
    \end{align*}
\end{theorem}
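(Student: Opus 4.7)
The plan is to transfer a clean second-moment identity into an almost-sure bound via Markov's inequality and Borel--Cantelli. Under the $l^2$ norm the $n$-th level decomposes as
$$\|\mathbb{B}_{0,t}^n\|_{n,l^2}^2 = \sum_{i_1, \dots, i_n = 1}^d |\mathbb{B}_{0,t}^{n, i_1, \dots, i_n}|^2,$$
so it suffices to control the expected square of each scalar iterated Itô integral. For a fixed multi-index $(i_1, \dots, i_n)$, writing $I_n(t) = \mathbb{B}_{0,t}^{n, i_1, \dots, i_n}$ one has $I_n(t) = \int_0^t I_{n-1}(s) \, dB^{i_n}_s$, and since $I_{n-1}$ is adapted, the Itô isometry yields $\E[I_n(t)^2] = \int_0^t \E[I_{n-1}(s)^2]\,ds$. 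An induction starting from $\E[(B^{i_1}_t)^2] = t$ delivers $\E[|\mathbb{B}_{0,t}^{n, i_1, \dots, i_n}|^2] = t^n/n!$, a formula independent of the multi-index (whether or not entries repeat, since Itô isometry requires only adaptedness and not orthogonality of the driving components). Summing over all $d^n$ multi-indices gives the sharp identity
$$\E\left[\|\mathbb{B}_{0,t}^n\|_{n,l^2}^2\right] = \frac{(d t)^n}{n!}.$$

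Next, for any fixed $\epsilon > 1$, Markov's inequality yields
$$\P\left(\|\mathbb{B}_{0,t}^n\|_{n,l^2}^2 > \epsilon^n \frac{(dt)^n}{n!}\right) \leq \epsilon^{-n},$$
which is summable in $n$. By Borel--Cantelli, almost surely for all sufficiently large $n$ one has $\|\mathbb{B}_{0,t}^n\|_{n,l^2}^2 \leq \epsilon^n (dt)^n/n!$. Stirling's formula gives $((n/2)!)^2/n! \sim \sqrt{\pi n/2}\,2^{-n}$, so its $n$-th root tends to $1/2$. Multiplying through,
$$\limsup_n \left\|\left(\frac{n}{2}\right)! \mathbb{B}_{0,t}^n\right\|_n^{2/n} \leq \frac{\epsilon\, dt}{2}\quad \text{almost surely},$$
and letting $\epsilon$ decrease to $1$ through a countable sequence yields the claimed bound $dt/2$.

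The essential technical point is the recursive application of the Itô isometry, which propagates cleanly precisely because Itô iterated integrals are martingales; the Stratonovich case would not admit such a direct variance calculation. What this argument does not provide is a matching lower bound or the sharp value of $\kappa_d$, since the second-moment method captures only the upper bound and cannot detect cancellation between different components of the signature. Pinning down the sharp multi-dimensional constant would presumably require analysing the full distribution of $\mathbb{B}_{0,t}^n$ on the $n$-th Wiener chaos rather than merely its $L^2$ norm, which is exactly the obstacle flagged in Section \ref{section-multidim} of the paper.
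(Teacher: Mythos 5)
Your proposal is correct and follows essentially the same route as the paper's proof: the second moment identity $\E\|\mathbb{B}_{0,t}^n\|_{n,l^2}^2 = (dt)^n/n!$ obtained by iterating the It\^o isometry, then Markov's inequality, Borel--Cantelli, Stirling's estimate $((n/2)!)^2/n! \sim \sqrt{\pi n/2}\,2^{-n}$, and a countable limiting argument in the slack parameter (your multiplicative $\epsilon \downarrow 1$ playing the role of the paper's additive $1/k$). The only difference is cosmetic: you normalise by the expectation before applying Markov, which makes the summability geometric at a glance, whereas the paper sums the unnormalised expectations and splits the series at a threshold $N$.
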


\begin{proof}
    Fix $k\in \mathbb{N}$. If we show that we can bound the limit from above by $\frac{dt}{2} + \frac{1}{k}$ outside some null set $\mathcal{N}_k$, then we can let $\mathcal{N} = \bigcup_k \mathcal{N}_k$, which is still a null set, and then the claim follows. It is enough to show that:
    \begin{align*}
        \P \left( \left \|  \left(\frac{n}{2}\right)! \mathbb{B}_{0,t}^n \right \|_n^{\frac{2}{n}} > \frac{dt}{2} + \frac{1}{k}  \text{ i.o.} \right) = 0
    \end{align*}
    By Borel-Cantelli, it suffices checking that the following sum converges: 
    \begin{align*}
        \sum_{n=1}^{\infty} \P \left( \left(\left(\frac{n}{2}\right) !\right)^2 \left \| \mathbb{B}_{0,t}^n \right \|_n^2 > \left(\frac{dt}{2} + \frac{1}{k}\right)^{n} \right)
    \end{align*}
    Markov inequality gives us the following upper bound:
    \begin{align*}
        \sum_{n=1}^{\infty} \left(\frac{dt}{2} + \frac{1}{k}\right)^{-n} \E \left[ \left(\left(\frac{n}{2}\right) !\right)^2 \left \| \mathbb{B}_{0,t}^n \right \|_n^2 \right]
    \end{align*}
    Now if we manage to show that 
    \begin{align}\label{eq:6}
        \lim_n \E \left[ \left(\left(\frac{n}{2}\right) !\right)^2 \left \| \mathbb{B}_{0,t}^n \right \|_n^2 \right]^{\frac{1}{n}} \leq \frac{dt}{2} 
    \end{align}
    we can find $N$ such that for $n>N$ the LHS is less that $\frac{dt}{2} + \frac{1}{2k}$ and then the sum above converges: 
    \begin{align*}
        \sum_{n=1}^{\infty} \left(\frac{dt}{2} + \frac{1}{k}\right)^{-n} \E \left[ \left(\left(\frac{n}{2}\right) !\right)^2 \left \| \mathbb{B}_{0,t}^n \right \|_n^2 \right] \leq \sum_{n=1}^N (\dots) + \sum_{n=N+1}^{\infty} \left(\frac{\frac{dt}{2} + \frac{1}{2k}}{\frac{dt}{2} + \frac{1}{k}}\right)^{-n} < \infty
    \end{align*}
    We actually manage to show equality for (\ref{eq:6}). Note that asymptotically, $\left(\left(\frac{n}{2}\right) !\right)^2 \sim \frac{n!}{2^n}$. Then the limit is equal to: $\frac{1}{2} \lim_n \left(n! \E \| \mathbb{B}_{0,t}^n  \|_n^2 \right)^{1/n}$, and so it suffices to show that $\E \| \mathbb{B}_{0,t}^n  \|_n^2 = \frac{d^nt^n}{n!}$. We'll do this by induction on $n$. For $n=1$ we have by Itô isometry that: 
    \begin{align*}
        \E \| \mathbb{B}_{0,t}^1  \|_1^2 = \sum_{i=1}^d \E \left[\left( \int_0^t \d B_s^i \right)^2
        \right] = dt   
    \end{align*}
    Assuming the claim for $n-1$, we now have:
    \begin{align*}
        \E \| \mathbb{B}_{0,t}^n  \|_n^2 &= \sum_{i_1=1, \dots, i_n = 1}^d \E \left[\left( \int_0^t \dots \int_0^{s_2} \d B_{s_1}^{i_1} \dots \d B_{s_n}^{i_n} \right)^2 \right] \\
        &= \sum_{i_n = 1}^d \int_0^t   \sum_{i_1=1, \dots, i_{n-1} = 1}^d \E \left[\left( \int_0^{s_n} \dots \int_0^{s_2} \d B_{s_1}^{i_1} \dots \d B_{s_{n-1}}^{i_{n-1}} \right)^2\right] \d s_n \\
        &= d \int_0^t   \frac{d^{n-1} s_n^{n-1}}{(n-1)!} \d s_n = \frac{d^nt^n}{n!},
    \end{align*}
    which concludes the proof.
\end{proof}

\subsection{Further work and conclusion}

The first step in extending this into general semimartingales would be looking at a process, whose each coordinate is a Brownian motion scaled by a different constant. In this case we could show that the limit is bounded by the square of the highest constant, but that's not sharp.

Proving a lower bound is even harder. In \cite{Boedihardjo:2019td} a lower bound for Stratonovich signatures was proved using hyperbolic development (introduced in \cite{Hambly:2010td}), but this crucially relied on terms that stem from Itô-Stratonovich conversion, and so we couldn't apply it in our setting. Utilising formulas for the expected signature (e.g. \cite{Fawcett:2002td}) failed for similar reasons.

Extending the methods of Remark \ref{rem} wouldn't work as the "clock" of each coordinate runs at a different speed, so we can't change the time uniformly. Deriving a generalization of Hermite polynomials in higher dimension might be useful, but now it's unclear what SDE we'd like it to solve - how do we even wish define the quadratic variation for a $d$-dimensional process?

Another method we unsuccessfully tried was to use that the signature kernel solves a Goursat PDE \cite{Salvi:2021td} and then exploit properties of this PDE. 

Finally, we ran simulations of a 2-dimensional Brownian motion, whose coordinates are scaled by different factors, and computed signature of its piecewise linear approximations, which should converge to the Stratonovich signature. These simulations seemed to suggest the following:
\begin{conjecture}
    Let $(B_t)_{t\geq 0} = (B_t^1, \dots, B_t^d)_{t\geq 0}$ be a $d$-dimensional Brownian motion and $V$ an invertible $d\times d$ matrix. Then for $X = VB$ we have $L^{\mathbb{X}, \text{Strat}}_{s,t} = |\det V|\kappa_d (t-s)$, where $\kappa_d$ only depends on the dimension $d$ and choice of tensor norms.
\end{conjecture}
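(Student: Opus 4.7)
The plan rests on the algebraic identity $\mathbb{X}^n_{s,t} = V^{\otimes n}\, \mathbb{B}^n_{s,t}$, which follows from multilinearity once we substitute $\mathrm{d}X^i = \sum_j V^i_j\, \mathrm{d}B^j$ inside the definition of the iterated Stratonovich integrals. Hence
\begin{align*}
L^{\mathbb{X}, \text{Strat}}_{s,t} = \limsup_n \bigl\| (n/2)!\, V^{\otimes n} \mathbb{B}^n_{s,t} \bigr\|_n^{2/n}.
\end{align*}
Take the singular value decomposition $V = U\Sigma W$ with $U, W \in O(d)$ and $\Sigma = \operatorname{diag}(\sigma_1, \dots, \sigma_d)$, $\sigma_i > 0$. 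Rotational invariance of Brownian motion gives $WB \stackrel{\text{law}}{=} B$, which absorbs $W$. Under the $l^2$ tensor norms (the setting of the preceding theorem and the natural one for the simulations), $U^{\otimes n}$ is orthogonal on $(\mathbb{R}^d)^{\otimes n}$ and therefore norm-preserving, which absorbs $U$ as well. This reduces the conjecture to the diagonal case $X^i_t = \sigma_i B^i_t$, in which
\begin{align*}
\|\mathbb{X}^n_{s,t}\|_2^2 = \sum_{i_1, \dots, i_n} \sigma_{i_1}^2 \cdots \sigma_{i_n}^2\, \bigl|\mathbb{B}^{n, i_1, \dots, i_n}_{s,t}\bigr|^2.
\end{align*}

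Writing $k_j$ for the multiplicity of coordinate $j$ in $(i_1, \dots, i_n)$, the weight becomes $\prod_j \sigma_j^{2 k_j}$. The heuristic that drives the conjecture is that the asymptotic $l^2$ mass of $\mathbb{B}^n_{s,t}$ spreads across multi-indices in such a way that the dominant contribution comes from \emph{balanced} indices with $k_j \approx n/d$; on such indices $\prod_j \sigma_j^{2k_j} \approx (\sigma_1 \cdots \sigma_d)^{2n/d} = |\det V|^{2n/d}$, and taking $2/n$-th roots contributes a factor of $|\det V|^{2/d}$. This actually points to
\begin{align*}
L^{\mathbb{X}, \text{Strat}}_{s,t} = |\det V|^{2/d} \kappa_d (t-s),
\end{align*}
which matches the stated conjecture when $d = 2$ (the simulation regime) but suggests an amendment for $d \geq 3$. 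The sanity check $V = \sigma I$ confirms this: then $\mathbb{X}^n = \sigma^n \mathbb{B}^n$ gives $L^{\mathbb{X}} = \sigma^2 \kappa_d(t-s)$, which equals $|\det V|\kappa_d(t-s) = \sigma^d\kappa_d(t-s)$ only when $d = 2$ but always agrees with the $|\det V|^{2/d}$ form.

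The main obstacle is making the balanced-multi-index heuristic rigorous. The crude operator-norm bound $\|V^{\otimes n}\|_{\text{op}} = \sigma_{\max}^n$ only yields $L^\mathbb{X} \leq \sigma_{\max}^2 \kappa_d (t-s)$, which is far too weak as soon as the $\sigma_i$ differ. A sharper upper bound would require a combinatorial evaluation of $\mathbb{E}|\mathbb{B}^{n,\mathbf{i}}_{s,t}|^2$ as a function of the index partition, e.g.\ via shuffle expansions and Wick contractions of Gaussian moments, together with a concentration statement that unbalanced partitions contribute exponentially less. For the lower bound, the hyperbolic development argument of Boedihardjo--Geng should adapt to $X = \Sigma B$, since $\langle X^i, X^j \rangle_t = \sigma_i^2 \delta_{ij}\, t$ preserves the Itô--Stratonovich cross-term structure, but the Lorentz metric on the development space would need to be re-tuned to the $\sigma_i$ so as to output precisely the $|\det V|^{2/d}$ exponential rate. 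Neither step is routine, which is presumably why the authors left this as a conjecture.
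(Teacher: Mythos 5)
You are addressing a \emph{conjecture}: the paper offers no proof, only $d=2$ simulations of piecewise-linear approximations, so there is no argument of the authors to compare yours against --- and your proposal is not a proof either, as you yourself acknowledge. The parts that are solid are the identity $\mathbb{X}^n_{s,t}=V^{\otimes n}\mathbb{B}^n_{s,t}$, the SVD reduction, and the absorption of the two orthogonal factors (one by rotation-invariance of $B$, one by invariance of the $l^2$ tensor norms under $U^{\otimes n}$). Everything after that is heuristic, and the balanced-multi-index step is exactly the open problem: you would need to show that the almost-sure asymptotic $l^2$ mass of $\mathbb{B}^n$ concentrates on indices with $k_j\approx n/d$ and remains dominant \emph{after} weighting by $\prod_j\sigma_j^{2k_j}$; when the $\sigma_j$ differ, the weights shift the optimal index profile, so it is not clear the weighted maximum sits at the balanced profile at all. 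Note also that you have silently specialised to $l^2$ norms, whereas the conjecture is stated for a general admissible choice, with $\kappa_d$ depending on that choice; and the adaptation of the hyperbolic development lower bound is left entirely programmatic.

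What is genuinely valuable in your proposal is the scaling check, which is correct: for $V=\sigma I$ one has $\mathbb{X}^n=\sigma^n\mathbb{B}^n$, hence $L^{\mathbb{X},\text{Strat}}_{s,t}=\sigma^2\kappa_d(t-s)$, while the conjectured formula gives $\sigma^d\kappa_d(t-s)$; since $\kappa_d>0$ (at least for the norms where the lower bound of Boedihardjo--Geng applies, and conjecturally for $l^2$), the statement as written cannot hold for $d\neq 2$, and the paper's evidence cannot see this because in $d=2$ the exponents coincide. However, your amended form $|\det V|^{2/d}\kappa_d(t-s)$ is also doubtful for $d\geq 3$: take $\Sigma=\operatorname{diag}(1,\dots,1,\epsilon)$. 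The sub-tensor of $\mathbb{X}^n$ over indices drawn from $\{1,\dots,d-1\}$ is precisely the signature of the first $d-1$ coordinates, and the $l^2$ norm of $\mathbb{X}^n$ dominates that of the sub-tensor, so $L^{\mathbb{X},\text{Strat}}_{s,t}\geq \kappa_{d-1}(t-s)$ uniformly in $\epsilon$, whereas $|\det V|^{2/d}=\epsilon^{2/d}\to 0$ (and likewise $|\det V|\to 0$). So, provided $\kappa_{d-1}>0$ for these norms, in $d\geq 3$ the limit cannot be a function of $\det V$ alone, in either your form or the paper's; any correct statement must involve a finer symmetric function of the singular values of $V$. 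In short: your reduction and your $d\neq 2$ objection are sound and worth recording, but the proposal neither proves the conjecture nor yields a defensible replacement beyond $d=2$.
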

The precise value and meaning of the constant $\kappa_d$ is still unclear; according to our simulations in $d=2$ it should be around 1.3 for $l^1$ and 0.7 for $l^2$ norms. 

What is worse, convergence to the constant was really slow for some sample paths, and computational cost of calculating higher order terms of the signature explodes exponentially fast. Even if it didn't, results of this paper still can't be used for estimation of quadratic variation or the Hurst parameter, because we yet can't recover Itô signature from the signature of piecewise linear approximations. 

We managed to prove a simple formula for this conversion in $d=1$: 
\begin{align*}
    \mathbb{B}_{0,t}^{\text{Itô}} = \mathbb{B}^{\text{Strat}}_{0,t} \otimes \exp \left(-\frac{1}{2}e_1 \otimes e_1 t\right)
\end{align*}
However, there is still very little we know about the higher-dimensional setting, which would require further work.

\bibliographystyle{alpha}
\bibliography{references} 

\end{document}